\titleformat{\section}[hang]
{\large\bfseries}
{\thesection}
{1em}
\nonstopmode \numberwithin{equation}{section}
\newtheorem{theorem}{Theorem}[section]
 \newtheorem{corollary}{Corollary}[section]
\newtheorem{lemma}{Lemma}[section]
\newtheorem{remark}{Remark}[section]
\begin{document}
\thispagestyle{empty}

\renewcommand{\thefootnote}

\begin{center}
{\large\bf General Viscosity Implicit Midpoint Rule For Nonexpansive Mapping}

\vspace{.4cm}
{\bf Shuja Haider Rizvi$^*${\footnote{$^*$Corresponding author; E-mail address:shujarizvi07@gmail.com (S.H. Rizvi)}}}

\vspace{0.15in}
{\it
${}^{}$Department of Mathematics, Babu Banarasi Das University, Lucknow 226028, India} \\
\end{center}

\vspace{.7cm}
\baselineskip=18pt

\noindent{\bf Abstract:} In this work, we suggest a general viscosity  implicit midpoint rule for nonexpansive mapping in the framework of Hilbert space.
Further, under the certain conditions imposed on the sequence of parameters, strong convergence theorem is  proved by the sequence generated by the proposed
iterative scheme, which, in addition, is the unique solution of the  variational inequality problem. Furthermore, we provide some applications  to variational inequalities,
Fredholm integral equations, and nonlinear evolution equations. The results  presented in this work  may be treated as an improvement, extension and
refinement of some corresponding ones in the literature.\\

\noindent {\it  Keywords:} General viscosity implicit midpoint rule;  Nonexpansive mapping; Fixed-point problem; Iterative scheme.\\

\noindent {\it 2010 Mathematics subject  classifications:} Primary 65K15; Secondary: 47J25 65J15 90C33.\\

\section{Introduction}

Throughout the paper unless otherwise stated, $H$ denotes a real Hilbert space, we denote the norm and inner product of  $H$  by $\|\cdot\|$, and
$\langle .,. \rangle$ respectively. Let $K$ be a nonempty, closed and convex subset of $H$. Let $\{x_{n}\}$ be any sequence in $H$, then $x_{n}\rightarrow x$
(respectively, $x_{n}\rightharpoonup x$) will denote strong (respectively, weak) convergence of the sequence $\{x_{n}\}$.

\vspace{.25cm}
A mapping $S:H\to H$ is said to be  \verb"contraction mapping" if there exists a constant $\alpha\in (0,1)$ such that
$$\|Sx-Sy\|\leq\alpha\|x-y\|,$$
for all $x,y\in H$. If $\alpha=1$ then $S:H\to H$ is said to be \verb"nonexpansive mapping" i.e., $\|Sx-Sy\|\leq\|x-y\|$, for all $x,y\in H$. We use
Fix$(S)$ to denote the set of fixed points of $S$. An operator  $B:H\to H$ is said to be \verb"strongly positive bounded linear operator",  if there exists
a constant $\bar\gamma>0$ such that
$$\langle Bx,x\rangle \geq \bar\gamma\|x\|^2,~~~~~\forall x\in H.$$

The viscosity approximation method of selecting a particular fixed point of a given nonexpansive mapping was proposed by Moudafi {\rm\cite{Moudafi}} in the
framework of a Hilbert space, which  generates the sequence $\{x_n\}$ by the following iterative scheme:
\begin{equation}\label{s1e1}
x_{n+1}=\alpha_n Q(x_n)+ (1-\alpha_n)Sx_n,~ n\geq 0,
\end{equation}
where $\{\alpha_n\}\subset [0,1]$ and $Q$ is a contraction mapping on $H$. Note that the iterative scheme (\ref{s1e1})  generalize the results of
Browder {\rm\cite{Browder1}} and Halpern \cite{Halpern} in another direction.  The convergence of the explicit iterative scheme (\ref{s1e1}) has been the
subject of many authors because under suitable conditions these iteration converge strongly to the unique solution $q\in {\rm Fix}(S)$ of the variational
inequality
\begin{equation}\label{s1e2}
\langle(I-Q)q,x-q\rangle \geq 0,~~ \forall x\in {\rm Fix}(S).
\end{equation}
This fact allows us to apply this method to convex optimization, linear programming and monotone inclusions. In 2004, Xu \cite{Xu} extended the result of
Moudafi {\rm\cite{Moudafi}} to uniformly smooth Banach spaces and obtained strong convergence theorem. For related work, see \cite{Chidume1,Berinde,Yamada}.

\vspace{.25cm}
In 2006, Marino and Xu {\rm\cite{Marino}}  introduced  the following iterative scheme based on viscosity approximation method,  for fixed point problem
 for a nonexpansive mapping $S$ on $H$:
\begin{equation}\label{s1e3}
x_{n+1}=\alpha_n \gamma Q(x_n)+(I-\alpha_nB)Sx_n,~ n\geq 0,
\end{equation}
where $Q$ is a contraction mapping on $H$ with constant $\alpha >0$, $B$ is a strongly positive  self-adjoint bounded linear operator on $H$ with constant
$\bar{\gamma}>0$ and $\gamma \in (0, \frac{\bar{\gamma}}{\alpha})$. They proved that the sequence $\{x_n\}$ generated by (\ref{s1e3}) converge strongly to the
unique solution of the variational inequality
\begin{equation}\label{s1e4}
\langle (B-\gamma Q)z,x-z\rangle\geq0,~~~\forall x\in {\rm Fix}(S),
\end{equation}
which is the optimality condition for the minimization problem
$$\min\limits_{x\in {\rm Fix}(S)}\dfrac{1}{2}\langle Bx,x\rangle-h(x),$$
where $h$ is the potential function for $\gamma Q$.

\vspace{.25cm}
The implicit midpoint rule is one of the powerful numerical methods for solving ordinary differential equations and differential algebraic equations.
For related works, we refer to {\rm\cite{Hofer,Bader,Deuflhard,Auzinger,Bayreuth,Schneider,Somalia,Somalia1}} and the references cited therein. For instance,
consider the initial value problem for the differential equation  $y'(t)=f(y(t))$ with the initial condition $y(0)=y_0$, where $f$ is a continuous function
from $R^d$ to $R^d$. The implicit midpoint rule in which generates a sequence $\{y_n\}$ by the following the recurrence relation
$$\dfrac{1}{h}(y_{n+1}-y_{n})=f\left(\dfrac{y_{n+1}-y_{n}}{2}\right).$$

In 2014,  implicit midpoint rule has been extended by Alghamdi {\it et al.} {\rm\cite{Alghamdi}} to nonexpansive mappings, which generates a sequence
$\{x_n\}$ by the following implicit iterative scheme:
\begin{equation}\label{s1e5}
x_{n+1}=\alpha_{n} x_n+(1-\alpha_{n})S\left(\dfrac{x_{n}+x_{n+1}}{2}\right),~ n\geq 0,
\end{equation}
Recently, Xu {\it et al.} {\rm\cite{Xu1}} extended and generalized the results of Alghamdi {\it et al.} {\rm\cite{Alghamdi}} and presented the following
viscosity implicit midpoint rule for nonexpansive mapping, which generates a sequence $\{x_n\}$ by the following implicit iterative scheme:
\begin{equation}\label{s1e6}
x_{n+1}=\alpha_{n}Q(x_n)+(1-\alpha_{n})S\left(\dfrac{x_{n}+x_{n+1}}{2}\right),~ n\geq 0,
\end{equation}
where $\{\alpha_n\}\subset [0,1]$ and $S$ is a nonexpansive mapping. They proved that under some mild conditions, the sequence generated by  (\ref{s1e6})
converge in norm to fixed point of nonexpansive mapping, which, in addition, solves the variational inequality (\ref{s1e2}). Further related work, see {\rm\cite{Yao,Zhao}}.

\vspace{.25cm}
Motivated by the work of  Moudafi {\rm\cite{Moudafi}}, Xu \cite{Xu}, Marino and Xu {\rm\cite{Marino}},   Alghamdi {\it et al.} {\rm\cite{Alghamdi}} and
Xu {\it et al.} {\rm\cite{Xu1}}, and by the ongoing research in this direction,  we suggest and analyze general viscosity  implicit midpoint   iterative
scheme for fixed point of  nonexpansive  mapping in  real Hilbert space. Further, based on these general viscosity  implicit midpoint   iterative  scheme,
we prove the strong convergence theorems for a nonexpansive  mapping.  Furthermore, some consequences from these theorems are also derived.
The results and methods presented here extend and  generalize the corresponding results and methods given in \cite{Moudafi,Xu,Marino,Alghamdi,Xu1}.

\section{Preliminaries}

We recall some concepts and results which are needed in sequel.

\vspace{.25cm}
For every point  $x \in H$, there exists a unique nearest point in $K$ denoted by $P_ {K} x$ such that
\begin{equation}\label{c1s2e3*}
\|x-P_{K}x\|\leq \|x-y\|, ~~ \forall  y \in K.
\end{equation}
\begin{remark}{\rm\cite{Bauschke}}   It is well known that $P_{K}$ is nonexpansive mapping and satisfies
\begin{equation}\label{c1s2e4}
\langle x-y ,P_{K}x-P_{K}y \rangle \geq \|P_{K}x-P_{K}y\|^2, ~~\forall x,y \in H.\
\end{equation}
Moreover, $P_{K}x$ is characterized by the fact $P_{K}x\in K$ and
\begin{equation}\label{c1s2e5}
\langle x-P_{K}x,y-P_{K}x \rangle \leq 0.
\end{equation}
\end{remark}

\vspace{.25cm}
The following Lemma is the well known demiclosedness principles for nonexpansive mappings.

\begin{lemma}\label{l1}{\rm\cite{Bauschke,Goebel}}
Assume that $S$ be a nonexpansive self mapping of a closed and  convex subset $K$ of a Hilbert space $H$. If $S$ has a fixed point, then $I-S$ is demiclosed,
i.e., whenever $\{x_n\}$ is a sequence in $K$ converging weakly to some $x\in K$ and the sequence $\{(I-S)x_n\}$ converges strongly to some $y$, it follows
that $(I-S)x=y$.
\end{lemma}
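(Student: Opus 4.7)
The plan is to invoke Opial's property of a Hilbert space: if $x_n\rightharpoonup x$ then $\liminf_n\|x_n-x\|<\liminf_n\|x_n-z\|$ for every $z\neq x$. With that tool in hand the conclusion follows by a short contradiction argument, after first translating the problem to the case $y=0$.

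First, I would reduce to the zero-displacement case. Define $\tilde S u:=Su+y$ for $u\in K$. Since $\|\tilde S u-\tilde S v\|=\|Su-Sv\|\le\|u-v\|$, the map $\tilde S$ is nonexpansive, and the hypothesis $(I-S)x_n\to y$ rewrites as $x_n-\tilde S x_n=(I-S)x_n-y\to 0$. It therefore suffices to prove the specialization: \emph{if $x_n\rightharpoonup x$ and $x_n-\tilde S x_n\to 0$, then $\tilde S x=x$}; translating back then yields $Sx+y=x$, i.e., $(I-S)x=y$.

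Second, I would argue the specialization by contradiction. Suppose $\tilde S x\neq x$. Opial's inequality gives
$$\liminf_n\|x_n-x\|<\liminf_n\|x_n-\tilde S x\|.$$
On the other hand, the triangle inequality together with the nonexpansivity of $\tilde S$ yields
$$\|x_n-\tilde S x\|\le\|x_n-\tilde S x_n\|+\|\tilde S x_n-\tilde S x\|\le\|x_n-\tilde S x_n\|+\|x_n-x\|,$$
and since $\|x_n-\tilde S x_n\|\to 0$ it follows that $\liminf_n\|x_n-\tilde S x\|\le\liminf_n\|x_n-x\|$, contradicting the strict inequality above. Hence $\tilde S x=x$.

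The argument is standard, and the only real obstacle is the clean invocation of Opial's lemma; everything else is elementary. A fully self-contained alternative would be to expand the Hilbert-space identity $\|x_n-\tilde S x\|^2=\|x_n-x\|^2+2\langle x_n-x,x-\tilde S x\rangle+\|x-\tilde S x\|^2$, use weak convergence to kill the cross term, and compare with $\|x_n-\tilde S x\|^2\le(\|x_n-\tilde S x_n\|+\|x_n-x\|)^2$ to deduce $\|x-\tilde S x\|=0$. Either route makes it plain that the inner-product structure is what drives the conclusion.
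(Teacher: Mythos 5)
The paper does not prove this lemma at all: it is quoted verbatim from the literature (Bauschke--Combettes, Goebel--Kirk) as the standard demiclosedness principle, so there is no in-paper argument to compare against. Your proof is correct and is in fact the classical one: the translation $\tilde S u := Su + y$ reduces to the case $y=0$, and Opial's property of a Hilbert space combined with the estimate $\|x_n-\tilde S x\|\le\|x_n-\tilde S x_n\|+\|x_n-x\|$ forces $\tilde S x=x$. Two small remarks. First, $\tilde S$ need not map $K$ into itself, but your argument never uses that, only nonexpansivity on $K$ and the values $\tilde S x_n$, $\tilde S x$, so this is harmless. Second, your self-contained alternative via the expansion $\|x_n-\tilde S x\|^2=\|x_n-x\|^2+2\langle x_n-x,\,x-\tilde S x\rangle+\|x-\tilde S x\|^2$ is preferable in context, since it avoids quoting Opial's lemma as an external black box and makes visible that only weak convergence (to kill the cross term) and boundedness of $\{x_n\}$ are used; it also shows that the hypothesis ${\rm Fix}(S)\neq\emptyset$ in the statement is not actually needed for demiclosedness in a Hilbert space.
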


\begin{lemma}\label{l2}{\rm\cite{Bauschke,Goebel}} In real Hilbert space $H$, the following hold:
\begin{enumerate}
\item[{(i)}]\begin{equation}\label{c1s2e10}
\|x+y\|^2 \leq \|x\|^2+2\langle y,x+y \rangle, ~~\forall x,y \in H;
\end{equation}
\item[{(ii)}]\begin{equation}\label{c1s2e11} \|\lambda x+(1-\lambda)y\|^2=\lambda\|x\|^2+(1-\lambda)\|y\|^2-\lambda(1-\lambda)\|x-y\|^2,
\end{equation} for all $x,y\in H$ and $\lambda\in (0,1)$.
\end{enumerate}
\end{lemma}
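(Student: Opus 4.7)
The plan is to prove both identities by direct expansion of the inner product, since in a real Hilbert space $\|z\|^2=\langle z,z\rangle$ and the inner product is bilinear and symmetric. No machinery beyond these algebraic properties is needed.

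For part (i), I would first expand the left hand side:
\begin{equation*}
\|x+y\|^2 = \langle x+y,x+y\rangle = \|x\|^2 + 2\langle x,y\rangle + \|y\|^2.
\end{equation*}
Then I would rewrite the target right hand side as
\begin{equation*}
\|x\|^2 + 2\langle y, x+y\rangle = \|x\|^2 + 2\langle x,y\rangle + 2\|y\|^2,
\end{equation*}
and observe that the difference between the two expressions is exactly $\|y\|^2\ge 0$. This immediately yields the inequality, with equality holding precisely when $y=0$. There is no real obstacle here; it is a one-line calculation once the inner product is expanded.

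For part (ii), I would again compute $\|\lambda x+(1-\lambda)y\|^2$ by bilinearity:
\begin{equation*}
\|\lambda x+(1-\lambda)y\|^2 = \lambda^2\|x\|^2 + 2\lambda(1-\lambda)\langle x,y\rangle + (1-\lambda)^2\|y\|^2.
\end{equation*}
Separately, I would expand the proposed right hand side by writing $\|x-y\|^2=\|x\|^2-2\langle x,y\rangle+\|y\|^2$ and collecting like terms:
\begin{equation*}
\lambda\|x\|^2+(1-\lambda)\|y\|^2-\lambda(1-\lambda)\|x-y\|^2 = [\lambda-\lambda(1-\lambda)]\|x\|^2+[(1-\lambda)-\lambda(1-\lambda)]\|y\|^2 + 2\lambda(1-\lambda)\langle x,y\rangle.
\end{equation*}
The coefficients simplify to $\lambda^2$ and $(1-\lambda)^2$ respectively, matching the expansion of the left hand side term by term. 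Hence equality holds.

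Since both statements reduce to symbolic manipulation of the inner product, the only care required is to track signs and factorizations cleanly; neither part relies on $\lambda\in(0,1)$ beyond the fact that the identity is stated in that range, and in fact (ii) holds for every real $\lambda$. I do not anticipate any genuine difficulty in either computation.
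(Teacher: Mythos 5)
Your proof is correct, and it is the standard direct-expansion argument; the paper itself states this lemma without proof, citing standard references, where exactly this bilinearity computation is the intended justification. Your added observations (equality in (i) iff $y=0$, and that (ii) holds for all real $\lambda$) are accurate and harmless.
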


\begin{lemma}\label{l3}{\rm\cite{Marino}} Assume that $B$ is a strongly positive  self-adjoint bounded linear operator on a Hilbert space $H$ with constant $\bar\gamma>0$ and $0<\rho\leq\|B\|^{-1}$. Then $\|I-\rho B\|\leq1-\rho\bar\gamma$.
\end{lemma}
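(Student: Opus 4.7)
The plan is to exploit the fact that $I-\rho B$ is self-adjoint (since $B$ is) and then use the standard characterization of the operator norm of a self-adjoint operator, namely $\|A\|=\sup_{\|x\|=1}|\langle Ax,x\rangle|$. Applying this to $A=I-\rho B$ reduces the inequality to controlling the scalar quantity $1-\rho\langle Bx,x\rangle$ uniformly in unit vectors $x$.

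First I would observe that for any unit vector $x\in H$, strong positivity gives $\langle Bx,x\rangle\geq\bar\gamma$, while Cauchy--Schwarz yields $\langle Bx,x\rangle\leq\|Bx\|\cdot\|x\|\leq\|B\|$. Hence
\[
\bar\gamma\;\leq\;\langle Bx,x\rangle\;\leq\;\|B\|.
\]
Multiplying by $\rho>0$ and using the hypothesis $\rho\|B\|\leq 1$, I get $\rho\bar\gamma\leq\rho\langle Bx,x\rangle\leq\rho\|B\|\leq 1$, which rearranges to
\[
0\;\leq\;1-\rho\|B\|\;\leq\;1-\rho\langle Bx,x\rangle\;\leq\;1-\rho\bar\gamma.
\]
In particular $1-\rho\langle Bx,x\rangle\geq 0$, so its absolute value is itself, and the upper bound $1-\rho\bar\gamma$ holds pointwise in $x$.

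Finally I would take the supremum over unit vectors: since $I-\rho B$ is self-adjoint,
\[
\|I-\rho B\|=\sup_{\|x\|=1}\bigl|\langle(I-\rho B)x,x\rangle\bigr|=\sup_{\|x\|=1}\bigl|1-\rho\langle Bx,x\rangle\bigr|\leq 1-\rho\bar\gamma,
\]
which is the desired inequality.

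I do not anticipate a real obstacle here; the only subtlety worth highlighting is that one must invoke the self-adjointness of $B$ (which is implicit in being "strongly positive" in this paper, and is used elsewhere in the sequel) to justify the norm-equals-numerical-radius identity. Without it one would need to argue via spectral theory, e.g.\ that the spectrum of $I-\rho B$ lies in the interval $[1-\rho\|B\|,\,1-\rho\bar\gamma]\subset[0,1]$, and use the spectral radius formula; either route leads to the same conclusion.
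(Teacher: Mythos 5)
Your proof is correct and takes essentially the same route as the paper, which reproduces this argument inside the proof of Theorem 3.1 (with $\rho=\alpha_n$): first show $I-\rho B$ is positive using $\rho\|B\|\leq 1$, then invoke the identity $\|A\|=\sup_{\|x\|=1}\vert\langle Ax,x\rangle\vert$ for self-adjoint $A$ together with $\langle Bx,x\rangle\geq\bar\gamma$ on unit vectors. Your explicit remark that self-adjointness is what licenses the norm-equals-numerical-radius step is a point the paper leaves implicit, but the substance is identical.
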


\begin{lemma}\label{l4}{\rm\cite{Xu}}. Let  $\{a_{n}\}$ be a sequence of nonnegative real numbers such that
$$a_{n+1}\leq (1-\beta_{n})a_{n}+\delta_{n},~~~n \geq 0,$$
where $\{\beta_{n}\}$ is a sequence in $(0,1)$ and $\{\delta_{n}\}$ is a sequence in ${\mathbb R}$ such that
\begin{enumerate}
\item[{(i)}] $\sum\limits_{n=1}^{\infty} \beta_{n}=\infty;$
\item[{(ii)}] $\lim \sup\limits_{n \to \infty}\dfrac{\delta_{n}}{\beta_{n}}\leq 0$~ or~ $\sum\limits_{n=1}^{\infty} \vert\delta_{n}\vert <\infty$.
\end{enumerate}
Then $\lim\limits_{n \to\infty} a_{n}=0.$
\end{lemma}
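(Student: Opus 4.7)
The plan is to split on the two alternatives in condition (ii) and handle them separately, since the mechanism by which $a_n\to 0$ is quite different in each.

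For the first alternative, $\limsup_{n\to\infty}\delta_n/\beta_n\leq 0$, I would fix an arbitrary $\epsilon>0$ and use the assumption to choose $N_\epsilon$ so that $\delta_n\leq\epsilon\beta_n$ whenever $n\geq N_\epsilon$. Substituting into the recurrence gives $a_{n+1}\leq(1-\beta_n)a_n+\epsilon\beta_n$. Setting $M:=\max\{a_{N_\epsilon}-\epsilon,\,0\}$, a short induction shows $a_{n+1}\leq\epsilon+M\prod_{k=N_\epsilon}^{n}(1-\beta_k)$ for all $n\geq N_\epsilon$. Since $\sum_n\beta_n=\infty$, the elementary estimate $1-x\leq e^{-x}$ forces $\prod_{k=N_\epsilon}^{n}(1-\beta_k)\to 0$, so $\limsup_n a_n\leq\epsilon$. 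Letting $\epsilon\downarrow 0$ yields $a_n\to 0$.

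For the second alternative, $\sum_n|\delta_n|<\infty$, the strategy is to first show that $\{a_n\}$ converges and only then identify the limit as zero. Put $b_n:=a_n+\sum_{k=n}^{\infty}|\delta_k|$. From $(1-\beta_n)a_n\leq a_n$ and $\delta_n\leq|\delta_n|$ one sees $b_{n+1}\leq b_n$; so $\{b_n\}$ is nonnegative and nonincreasing, hence convergent, and since the tail $\sum_{k\geq n}|\delta_k|$ tends to $0$, the sequence $\{a_n\}$ itself converges to some $a\geq 0$. To pin down $a=0$, rewrite the recurrence as $\beta_n a_n\leq(a_n-a_{n+1})+\delta_n$ and sum: the telescoping contribution $\sum_n(a_n-a_{n+1})=a_0-a$ is finite and so is $\sum_n\delta_n$ (by absolute summability), giving $\sum_n\beta_n a_n<\infty$. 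Combined with $\sum_n\beta_n=\infty$ and $a_n\to a$, this forces $a=0$.

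Neither case is technically deep, but the points requiring care are (a) in the first case, defining $M$ as the maximum with zero so that the induction step $a_{n+1}\leq(1-\beta_n)\bigl(\epsilon+M\prod_{k=N_\epsilon}^{n-1}(1-\beta_k)\bigr)+\epsilon\beta_n=\epsilon+M\prod_{k=N_\epsilon}^{n}(1-\beta_k)$ goes through regardless of whether $a_{N_\epsilon}$ lies above or below $\epsilon$; and (b) in the second case, the two-stage logic, since one cannot conclude $a_n\to 0$ directly from $\sum_n\beta_n a_n<\infty$ without first establishing that $\{a_n\}$ has a limit. These are the only real obstacles I anticipate.
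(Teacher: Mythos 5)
The paper does not prove this lemma; it is quoted verbatim from Xu's 2004 paper \cite{Xu} as a known tool, so there is no in-paper argument to compare against. Your two-case proof is correct and is essentially the standard argument from that source: the $\max\{a_{N_\epsilon}-\epsilon,0\}$ device and the $1-x\le e^{-x}$ estimate handle the first alternative cleanly, and the monotone auxiliary sequence $b_n$ followed by the telescoping bound $\sum_n\beta_n a_n<\infty$ correctly pins down the limit in the second.
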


\section{General Viscosity Implicit Midpoint Rule}

In this section, we prove a strong convergence theorem based on the general viscosity implicit midpoint rule for fixed point of nonexpansive mapping.

\begin{theorem}\label{T1} Let $H$ be a real Hilbert space and $B:H\to H$  be a strongly positive bounded linear operator with constant $\bar\gamma>0$ such that $0<\gamma<\frac{\bar\gamma}{\alpha}<\gamma+\frac{1}{\alpha}$ and $Q:H\to H$ be a contraction  mapping with constant $\alpha\in (0,1)$. Let $S:H\to H$  be a nonexpansive mapping such that ${\rm Fix}(S)\neq\emptyset$. Let the iterative sequence $\{x_{n}\}$ be  generated by the following general viscosity implicit midpoint iterative schemes:
\begin{equation}\label{s3e1}
x_{n+1}=\alpha_{n}\gamma Q(x_n)+(1-\alpha_{n}B)S\left(\dfrac{x_{n}+x_{n+1}}{2}\right),~ n\geq 0,
\end{equation}
where $\{\alpha_{n}\}$ is the sequence in $(0,1)$ and satisfying the following conditions
\begin{enumerate}
\item[{(i)}]  $\lim\limits_{n \to \infty} \alpha_{n}=0$;
\item[{(ii)}] $\sum\limits_{n=0}^{\infty} \alpha_{n}=\infty$;
\item[{(iii)}] $\sum\limits_{n=1}^{\infty} \vert \alpha_{n}-\alpha_{n-1}\vert <\infty$ or $\lim\limits_{n\to\infty}\dfrac{\alpha_{n+1}}{\alpha_n}=1$.
\end{enumerate}
Then the sequence $\{x_{n}\}$  converge strongly to $z \in {\rm Fix}(S)$, where $z=P_{{\rm Fix}(S)}Q(z)$. In other words, which is also unique solution of variational inequality {\rm (\ref{s1e4})}.
\end{theorem}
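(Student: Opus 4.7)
The plan is to follow the standard viscosity/implicit-midpoint template, with the additional bookkeeping that the operator $B$ and the strict inequality $\bar\gamma>\gamma\alpha$ introduce. Well-definedness of (\ref{s3e1}) is immediate: for each fixed $n$ and $x_n$, the map $y\mapsto \alpha_n\gamma Q(x_n)+(I-\alpha_nB)S((x_n+y)/2)$ is, by Lemma~\ref{l3} and the nonexpansiveness of $S$, a contraction on $H$ with constant $(1-\alpha_n\bar\gamma)/2<1$, so Banach's contraction principle produces the unique $x_{n+1}$. For boundedness, fix $p\in{\rm Fix}(S)$ and expand
$$\|x_{n+1}-p\|\le \alpha_n\|\gamma Q(x_n)-Bp\|+(1-\alpha_n\bar\gamma)\|S((x_n+x_{n+1})/2)-p\|.$$
Using $\|\gamma Q(x_n)-Bp\|\le \gamma\alpha\|x_n-p\|+\|\gamma Q(p)-Bp\|$ and $\|S((x_n+x_{n+1})/2)-p\|\le \tfrac{1}{2}(\|x_n-p\|+\|x_{n+1}-p\|)$, then isolating $\|x_{n+1}-p\|$, yields (since $\bar\gamma-\gamma\alpha>0$) the induction bound $\|x_n-p\|\le \max\{\|x_0-p\|,\|\gamma Q(p)-Bp\|/(\bar\gamma-\gamma\alpha)\}$. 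Hence $\{x_n\}$, $\{Q(x_n)\}$, $\{Sy_n\}$ and $\{BSy_n\}$ are all bounded, where $y_n:=(x_n+x_{n+1})/2$.

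Next I would prove $\|x_{n+1}-x_n\|\to 0$ by subtracting the recursions for $x_{n+1}$ and $x_n$, regrouping into (a) a $\gamma\alpha$-contraction term from $Q(x_n)-Q(x_{n-1})$, (b) an $|\alpha_n-\alpha_{n-1}|$ drift from $(\alpha_n-\alpha_{n-1})(\gamma Q(x_{n-1})-BSy_{n-1})$, and (c) a nonexpansive term $(I-\alpha_nB)(Sy_n-Sy_{n-1})$ whose norm is at most $(1-\alpha_n\bar\gamma)\cdot\tfrac{1}{2}(\|x_{n+1}-x_n\|+\|x_n-x_{n-1}\|)$. Isolating $\|x_{n+1}-x_n\|$ produces
$$\|x_{n+1}-x_n\|\le\Bigl(1-\tfrac{2\alpha_n(\bar\gamma-\gamma\alpha)}{1+\alpha_n\bar\gamma}\Bigr)\|x_n-x_{n-1}\|+\tfrac{2M}{1+\alpha_n\bar\gamma}|\alpha_n-\alpha_{n-1}|,$$
to which Lemma~\ref{l4} applies under either alternative in (iii). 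Combined with $\|x_{n+1}-Sy_n\|=\alpha_n\|\gamma Q(x_n)-BSy_n\|\to 0$ (from (i)) and the triangle estimate $\|x_n-Sx_n\|\le \|x_n-x_{n+1}\|+\|x_{n+1}-Sy_n\|+\|Sy_n-Sx_n\|$, together with $\|y_n-x_n\|=\tfrac{1}{2}\|x_{n+1}-x_n\|$, this yields $\|x_n-Sx_n\|\to 0$.

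For the conclusion, let $z$ be the unique solution of (\ref{s1e4}). A weak-subsequential argument combined with the demiclosedness of $I-S$ (Lemma~\ref{l1}) gives $\limsup_{n\to\infty}\langle \gamma Q(z)-Bz,\,x_n-z\rangle\le 0$. Writing $x_{n+1}-z=\alpha_n(\gamma Q(x_n)-Bz)+(I-\alpha_nB)(Sy_n-z)$, applying Lemma~\ref{l2}(i) and Lemma~\ref{l3}, using the midpoint bound $\|Sy_n-z\|\le\tfrac{1}{2}(\|x_n-z\|+\|x_{n+1}-z\|)$, and rearranging, leads to a recursion $\|x_{n+1}-z\|^2\le (1-\beta_n)\|x_n-z\|^2+\beta_n\sigma_n$ with $\beta_n$ of order $2\alpha_n(\bar\gamma-\gamma\alpha)$ and $\sigma_n\to 0$; a final application of Lemma~\ref{l4} then gives $x_n\to z$.

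The principal obstacle is the implicit midpoint structure: because $x_{n+1}$ appears both on the left of (\ref{s3e1}) and inside $S((x_n+x_{n+1})/2)$ on the right, every estimate has to be transposed into an inequality in which the coefficient $(1+\alpha_n\bar\gamma)/2$ multiplies $\|x_{n+1}-\cdot\|$ before one can close the loop. Tracking the strict inequality $\bar\gamma-\gamma\alpha>0$ through this rearrangement is what makes the resulting $\beta_n$ land in $(0,1)$ with $\sum\beta_n=\infty$, as required by Lemma~\ref{l4}; once that algebra is under control, the argument reduces to the classical viscosity-approximation pattern used in Marino--Xu and Xu \emph{et al.}
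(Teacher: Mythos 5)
Your proposal is correct and follows essentially the same route as the paper's own proof: boundedness via the max-type induction bound, asymptotic regularity via Lemma~\ref{l4}, the estimate $\|x_n-Sx_n\|\to 0$, the $\limsup$ inequality through demiclosedness, and a final application of Lemma~\ref{l4} to the squared-norm recursion, all with the same $(1+\alpha_n\bar\gamma)/2$ transposition trick for the implicit term. Your added observation that each step of the implicit scheme is well defined (the inner map being a $(1-\alpha_n\bar\gamma)/2$-contraction) is a point the paper omits, and your sign conventions for the coefficient $1-\tfrac{2\alpha_n(\bar\gamma-\gamma\alpha)}{1+\alpha_n\bar\gamma}$ are actually cleaner than the paper's.
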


\begin{proof} Note that from condition (i), we may assume without loss of generality that $\alpha_n\leq(1-\beta_n)\|B\|^{-1}$ for all $n$. From Lemma \ref{l3}, we know that if $0<\rho\leq\|B\|^{-1}$, then $\|I-\rho B\|\leq 1-\rho\bar\gamma$. We will assume that $\|I-B\|\leq1-\bar\gamma$.

\vspace{.25cm}
Since $B$ is strongly positive self-adjoint bounded linear operator on $H$, then
$$\|B\|=\sup\{\vert\langle Bu,u\rangle\vert:u\in H, \|u\|=1\}.$$
Observe that
\begin{eqnarray}\nonumber
\langle (I-\alpha_n B)u,u\rangle&=&1-\alpha_n\langle Bu,u\rangle\\\nonumber
&\geq&1-\alpha_n\|B\| ~\geq0,
\end{eqnarray}
which implies that $(1-\alpha_n B)$ is positive. It follows that
\begin{eqnarray}\nonumber
\|(I-\alpha_n B\|&=&\sup\{\langle ((1-\alpha_n B)u,u\rangle:u\in H, \|u\|=1\}\\\nonumber
&=&\sup\{1-\alpha_n \langle Bu,u\rangle:u\in H, \|u\|=1\}\\\nonumber
&\leq&1-\alpha_n\bar\gamma.
\end{eqnarray}
Let $q=P_{{\rm Fix}(S)}.$ Since $Q$ is a contraction mapping with constant $\alpha\in (0,1)$. It follows that
\begin{eqnarray}\nonumber
\|q(I-B+\gamma Q)(x)-q(I-B+\gamma Q)(y)\|&\leq&\|(I-B+\gamma Q)(x)-(I-B+\gamma Q)(x)\|\\\nonumber
&\leq&\|I-B\|\|x-y\|+\gamma\|Q(x)-Q(y)\|\\\nonumber
&\leq& (1-\bar\gamma)\|x-y\|+\gamma\alpha\|x-y\|\\\nonumber
&\leq&(1-(\bar\gamma-\gamma\alpha))\|x-y\|,
\end{eqnarray}
for all $x,y\in H$. Therefore, the mapping  $q(I-B+\gamma Q)$ is a contraction mapping from $H$ into itself. It follows from Banach contraction principle
that there exists an element $z\in H$ such that $z=q(I-B+\gamma Q)z=P_{{\rm Fix}(S)}(I-B+\gamma Q)(z)$.

\vspace{.25cm}
Let $p\in {\rm Fix}(S)$, we compute
\begin{eqnarray}\nonumber
\|x_{n+1}-p\|&=&\Big\|\alpha_{n}\gamma Q(x_n)+(1-\alpha_n B)S\left(\dfrac{x_{n}+x_{n+1}}{2}\right)-p\Big\|\\\nonumber
&\leq& \alpha_{n}\|\gamma Q(x_n)-Bp\|+(1-\alpha_n \bar\gamma)\left\|S\left(\dfrac{x_{n}+x_{n+1}}{2}\right)-p\right\|\\\nonumber
&\leq& \alpha_{n}\left[\gamma\|Q(x_n)-Q(p)\|+\|\gamma Q(p)-Bp\|\right]+(1-\alpha_n \bar\gamma)\left\|\left(\dfrac{x_{n}+x_{n+1}}{2}\right)-p\right\|\\\nonumber
&\leq& \alpha_{n}\gamma\alpha\|x_n-p\|+\alpha_{n}\|\gamma Q(p)-Bp\|+\dfrac{(1-\alpha_n \bar\gamma)}{2}\left(\|x_{n}-p\|+\|x_{n+1}-p\|\right),\nonumber
\end{eqnarray}
which implies that
\begin{eqnarray}\nonumber
\dfrac{(1+\alpha_n \bar\gamma)}{2}\|x_{n+1}-p\|&\leq& \left[\alpha_n\gamma\alpha+\dfrac{(1-\alpha\bar\gamma)}{2}\right]\|x_n-p\|+\alpha_{n}\|\gamma Q(p)-Bp\|\\\nonumber
\|x_{n+1}-p\|&\leq& \left[\dfrac{1+2(\gamma\alpha-\bar\gamma)\alpha_n}{1+\alpha_n\bar\gamma}\right]\|x_n-p\|+\dfrac{2\alpha_{n}}{1+\alpha_n\bar\gamma}\|\gamma Q(p)-Bp\|\\\nonumber
&\leq& \left[1-\dfrac{2(\bar\gamma-\gamma\alpha)\alpha_n}{1+\alpha_n\bar\gamma}\right]\|x_n-p\|+\dfrac{2\alpha_{n}}{1+\alpha_n\bar\gamma}\|\gamma Q(p)-Bp\|\\\nonumber
&\leq& \left[1-\dfrac{2(\bar\gamma-\gamma\alpha)\alpha_n}{1+\alpha_n\bar\gamma}\right]\|x_n-p\|+\dfrac{2\alpha_{n}(\bar\gamma-\gamma\alpha)}{1+\alpha_n\bar\gamma}
\dfrac{\|\gamma Q(p)-Bp\|}{(\bar\gamma-\gamma\alpha)}.\nonumber
\end{eqnarray}
Consequently, we get
\begin{equation}
\|x_{n+1}-p\|\leq\max \Big\{\|x_{n}-p\|,\dfrac{\|\gamma Q(p)-Bp\|}{\bar\gamma-\gamma\alpha}\Big\}.\nonumber
\end{equation}
Therefore by using induction, we obtain
\begin{equation}\label{s3e2}
\|x_{n+1}-p\| \leq  \max \Big\{\|x_{0}-p\|,\dfrac{\|\gamma Q(p)-Bp\|}{\bar\gamma-\gamma\alpha}\Big\}.
\end{equation}

\vspace{.25cm}
Hence  the sequence $\{x_{n}\}$ is bounded.

\vspace{.25cm}
Next, we show that the sequence $\{x_{n}\}$ is asymptotically regular, i.e., $\lim\limits_{n\to\infty}\|x_{n+1}-x_n\|=0$. It follows from (\ref{s3e1}) that
\begin{eqnarray}\label{s3e3}\nonumber
\|x_{n+1}-x_n\|&=&\Big\|\alpha_{n}\gamma Q(x_n)+(1-\alpha_n B)S\left(\dfrac{x_{n}+x_{n+1}}{2}\right)\\\nonumber
&&-\Big[\alpha_{n-1}\gamma Q(x_{n-1})+(1-\alpha_{n-1}B)\left(\dfrac{x_{n-1}+x_{n}}{2}\right)\Big]\Big\|\\\nonumber
&=&\Big\|(1-\alpha_n B)\left[S\left(\dfrac{x_{n}+x_{n+1}}{2}\right)-S\left(\dfrac{x_{n-1}+x_{n}}{2}\right)\right]\\\nonumber
&&+(\alpha_{n-1}B-\alpha_{n}B)\left[S\left(\dfrac{x_{n-1}+x_{n}}{2}\right)-\gamma Q(x_{n-1})\right]+\alpha_{n}(\gamma Q(x_n)-\gamma Q(x_{n-1}))\Big\|\\\nonumber
&\leq&(1-\alpha_n\bar\gamma)\Big\|S\left(\dfrac{x_{n}+x_{n+1}}{2}\right)-S\left(\dfrac{x_{n-1}+x_{n}}{2}\right)\Big\|+M\vert\alpha_{n-1}-\alpha_{n}\vert\\\nonumber
&&+\alpha_{n}\gamma\|Q(x_{n})-Q(x_{n-1})\|\\\nonumber
&\leq&\dfrac{(1-\alpha_n\bar\gamma)}{2}\Big[\|x_{n+1}-x_{n}\|+\|x_{n}-x_{n-1}\|\Big]+M\vert\alpha_{n-1}-\alpha_{n}\vert+\alpha_{n}\gamma\alpha\|x_{n}-x_{n-1}\|,\nonumber
\end{eqnarray}
where $M:=\sup \left\{S\left(\dfrac{x_{n}+x_{n+1}}{2}\right)+\gamma\|Q(x_n)\|:n\in\mathbb{N}\right\}$. It follows that
\begin{eqnarray}\label{s3e3}\nonumber
\dfrac{(1+\alpha_n\bar\gamma)}{2}\|x_{n+1}-x_n\|&\leq&\dfrac{(1-\alpha_n\bar\gamma)}{2}\|x_{n}-x_{n-1}\|+M\vert\alpha_{n-1}-\alpha_{n}\vert+
\alpha_{n}\gamma\alpha\|x_{n}-x_{n-1}\|\\\nonumber
\|x_{n+1}-x_{n}\|&\leq&\dfrac{1+2(\gamma\alpha-\bar\gamma)\alpha_{n}}{1+\alpha_n \bar\gamma}\|x_n-x_{n-1}\|+\dfrac{2M}{1+\alpha_n\bar\gamma}\vert\alpha_{n-1}-\alpha_{n}\vert\\\nonumber
&\leq&\left(1-\dfrac{2(\gamma\alpha-\bar\gamma)\alpha_{n}}{1+\alpha_n\bar\gamma}\right)\|x_n-x_{n-1}\|
+\dfrac{2M}{1+\alpha_n\bar\gamma}\vert\alpha_{n-1}-\alpha_{n}\vert.\nonumber
\end{eqnarray}
By using the conditions (i)-(iii) of Lemma \ref{l4}, we obtain

\begin{equation}\label{s3e15}
\lim\limits_{n\to\infty}\|x_{n+1}-x_{n}\|=0.
\end{equation}

Next, we show that
\begin{equation}\nonumber
\lim\limits_{n\to\infty}\|x_n-Sx_n\|=0.
\end{equation}
We can write
\begin{eqnarray}\label{s3e4}\nonumber
\|x_n-Sx_n\|&\leq&\|x_{n}-x_{n+1}\|+\Big\|x_{n+1}-S\left(\dfrac{x_{n}+x_{n+1}}{2}\right)\Big\|\\\nonumber
&&+\Big\|S\left(\dfrac{x_{n}+x_{n+1}}{2}\right)-Sx_{n}\Big\|\\\nonumber
&\leq&\|x_{n}-x_{n+1}\|+\alpha_{n}\Big\|\gamma Q(x_{n})+(1-\gamma B)S\left(\dfrac{x_{n}+x_{n+1}}{2}\right)\Big\|+\dfrac{1}{2}\|x_{n+1}-x_{n}\|\\\nonumber
&\leq&\dfrac{3}{2}\|x_{n+1}-x_{n}\|+\alpha_{n}\Big\|\gamma Q(x_{n})-S\left(\dfrac{x_{n}+x_{n+1}}{2}\right)\Big\|\\\nonumber
&\leq&\dfrac{3}{2}\|x_{n+1}-x_{n}\|+\alpha_{n}M.
\end{eqnarray}
It follows from condition (i) and (\ref{s3e3}), we obtain
\begin{equation}\nonumber
\lim\limits_{n\to\infty}\|x_n-Sx_n\|=0.
\end{equation}

Since $\{x_{n}\}$ is bounded,   there exists a subsequence $\{x_{n_{k}}\}$ of $\{x_{n}\}$ such that $x_{n_{k}}\rightharpoonup \hat x$ say. Next, we claim
that $\lim\sup\limits _{n \to \infty} \langle Q(z)-z,x_{n}-z\rangle \leq 0$, where $z=P_{{\rm Fix}(S)}(I-B+\gamma Q)z$.
To show this inequality, we consider a  subsequence $\{x_{n_{k}}\}$ of $\{x_{n}\}$ such that $x_{n_{k}}\rightharpoonup \hat x$,
\begin{eqnarray}\label{s3e4}\nonumber
\lim\sup\limits_{n \to \infty} \langle (B-\gamma Q)z-z,x_{n}-z\rangle &=&\limsup\limits_{n \to \infty} \langle (B-\gamma Q)z-z,x_{n}-z\rangle\\\nonumber
&=&\lim\sup\limits_{k \to \infty} \langle (B-\gamma Q)z-z,x_{n_{k}}-z\rangle\\
&=&\langle (B-\gamma Q)z-z,\hat x-z\rangle ~\leq~ 0.
\end{eqnarray}
Finally, we show that $x_{n}\to z$. It follows from Lemma \ref{l2}  that
\begin{eqnarray}\nonumber
\|x_{n+1}-z\|^2 &=&\Big\|\alpha_n \gamma Q(x_n)+(I-\alpha_n B)S\left(\dfrac{x_{n}+x_{n+1}}{2}\right)-z\Big\|^2\\\nonumber
&=&\Big\|\alpha_n (\gamma Q(x_n)-Bz)+(I-\alpha_n B)S\left(\dfrac{x_{n}+x_{n+1}}{2}\right)-z\Big\|^2\\\nonumber
&\leq&\Big\|(I-\alpha_n B) S\left(\dfrac{x_{n}+x_{n+1}}{2}\right)-z\Big\|^2+2\alpha_n\langle \gamma Q(x_n)-Bz,x_{n+1}-z\rangle\\\nonumber
&\leq&(1-\alpha_n\bar\gamma)^2\Big\|S\left(\dfrac{x_{n}+x_{n+1}}{2}\right)-z\Big\|^2+2\alpha_n\gamma\|Q(x_n)-Q(z)\|\|x_{n+1}-z\|\\\nonumber
&&+2\alpha_n\langle \gamma Q(z)-Bz,x_{n+1}-z\rangle\\\nonumber
&\leq&(1-\alpha_n\bar\gamma)^2\Big\|\dfrac{x_{n}+x_{n+1}}{2}-z\Big\|^2+2\alpha_n\gamma\alpha\|x_n-z\|\|x_{n+1}-z\|\\\nonumber
&&+2\alpha_n\langle \gamma Q(z)-Bz,x_{n+1}-z\rangle\\\nonumber
&\leq&(1-\alpha_n\bar\gamma)^2\|x_n-z\|^2+2\alpha_n\gamma\alpha\|x_n-z\|\|x_{n+1}-z\|\\\nonumber
&&+2\alpha_n\langle \gamma Q(z)-Bz,x_{n+1}-z\rangle\\\nonumber
&\leq&(1-\alpha_n\bar\gamma)^2\Big[\dfrac{1}{2}\|x_{n}-z\|^2+\dfrac{1}{2}\|x_{n+1}-z\|^2-\dfrac{1}{4}\|x_{n+1}-x_{n}\|^2\Big]\\\nonumber
&&+2\alpha_n\gamma\alpha\Big[\|x_n-z\|^2+\|x_{n+1}-z\|^2\Big]+2\alpha_n\langle \gamma Q(z)-Bz,x_{n+1}-z\rangle\\\nonumber
&\leq&\Big[\dfrac{(1-\alpha_n\bar\gamma)^2}{2}+\alpha_n\gamma\alpha\Big](\|x_n-z\|^2+\|x_{n+1}-z\|^2)\\\nonumber
&&+2\alpha_n\langle \gamma Q(z)-Bz,x_{n+1}-z\rangle\\\nonumber
&\leq&\dfrac{1-2\alpha_n\bar\gamma+2\alpha_{n}\gamma\alpha}{2}(\|x_n-z\|^2+\|x_{n+1}-z\|^2)+\alpha_{n}^2\bar\gamma^2M_1\\\nonumber
&&+2\alpha_n\langle \gamma Q(z)-Bz,x_{n+1}-z\rangle.
\end{eqnarray}
This implies that
\begin{eqnarray}\label{s3e5}\nonumber
\|x_{n+1}-z\|^2&\leq& \dfrac{1-2(\bar\gamma-\gamma\alpha)\alpha_{n}}{1+2(\bar\gamma-\gamma\alpha)\alpha_{n}}\|x_{n}-z\|^2+\dfrac{2\alpha_{n}\bar\gamma^2}{1+2(\bar\gamma-\gamma\alpha)\alpha_{n}}
M_1\\\nonumber
&&+\dfrac{4\alpha_{n}}{1+2(\bar\gamma-\gamma\alpha)\alpha_{n}}\langle \gamma Q(z)-Bz,x_{n+1}-z\rangle\\\nonumber
&=&\Big[1-\dfrac{4(\bar\gamma-\gamma\alpha)\alpha_{n}}{1+2(\bar\gamma-\gamma\alpha)\alpha_{n}}\Big]\|x_{n}-z\|^2+\dfrac{2\alpha_{n}\bar\gamma^2}{1+2(\bar\gamma-\gamma\alpha)\alpha_{n}}
M_1\\\nonumber
&&+\dfrac{4\alpha_{n}}{1+2(\bar\gamma-\gamma\alpha)\alpha_{n}}\langle \gamma Q(z)-Bz,x_{n+1}-z\rangle\\
&=&(1-\delta_n)\|x_{n}-z\|^2+\delta_n \sigma_n,
\end{eqnarray}
where $M_1:=\sup\{\|x_{n}-z\|^2:n\geq1\}$, \\
$\delta_n=\dfrac{4(\bar\gamma-\gamma\alpha)\alpha_{n}}{1+2(\bar\gamma-\gamma\alpha)\alpha_n}$ and
$\sigma_{n}=\dfrac{(\alpha_{n}\bar\gamma^2)M_1}{1+2(\bar\gamma-\gamma\alpha)\alpha_n)}+\dfrac{4\alpha_n}{1+2(\bar\gamma-\gamma\alpha)\alpha_n}\langle \gamma Q(z)-Bz,x_{n+1}-z\rangle.$
Since  $\lim\limits_{n \to \infty} \alpha_{n}=0$ and $\sum\limits_{n=0}^{\infty} \alpha_{n}=\infty$, it is easy to see that  $\lim\limits_{n \to \infty} \delta_{n}=0$, $\sum\limits_{n=0}^{\infty} \delta_{n}=\infty$  and $\lim \sup\limits_{n \to \infty}{\sigma_{n}}\leq 0$. Hence from  (\ref{s3e4}), (\ref{s3e5}) and   Lemma \ref{l4}, we deduce that $x_{n} \to z$. This completes the proof.
\end{proof}

As a direct consequences of Theorem {\rm\ref{T1}}, we obtain the following result  due to Xu {\it et al.} {\rm\cite{Xu1}} for fixed point of nonexpansive
mapping.  Take $\gamma:=1$ and $B:=I$ in Theorem {\rm \ref{T1}} then the following Corollary  is obtained.
\begin{corollary}{\rm\cite{Xu1}}
Let $H$ be a real Hilbert space and $Q:H\to H$ be a contraction  mapping with constant $\alpha\in (0,1)$. Let $S:H\to H$  be a nonexpansive mapping such that
${\rm Fix}(S)\neq\emptyset$. Let the iterative sequence $\{x_{n}\}$ be  generated by the following general viscosity implicit midpoint iterative schemes:
\begin{equation}\label{s3e6}
x_{n+1}=\alpha_{n}Q(x_n)+(1-\alpha_{n})S\left(\dfrac{x_{n}+x_{n+1}}{2}\right),~ n\geq 0,
\end{equation}
where $\{\alpha_{n}\}$ is the sequence in $(0,1)$ and satisfying the conditions (i)-(iii) of Theorem {\rm \ref{T1}}. Then the sequence $\{x_{n}\}$  converge
strongly to $z \in {\rm Fix}(S)$, which, in addition also solves variational inequality {\rm (\ref{s1e2})}.
\end{corollary}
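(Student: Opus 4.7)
The plan is to obtain this corollary as a direct specialization of Theorem \ref{T1} by choosing the parameters $B := I$ (the identity operator on $H$) and $\gamma := 1$. The main task reduces to verifying that all hypotheses of Theorem \ref{T1} are satisfied under this substitution, and checking that both the iterative scheme and the limiting variational inequality degenerate to the ones claimed in the corollary.

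First I would observe that $B = I$ is a strongly positive self-adjoint bounded linear operator with constant $\bar\gamma = 1$, since $\langle Ix,x\rangle = \|x\|^{2}$ for every $x\in H$. With $\gamma = 1$ and $\bar\gamma = 1$, the parameter restriction $0 < \gamma < \frac{\bar\gamma}{\alpha} < \gamma + \frac{1}{\alpha}$ of Theorem \ref{T1} becomes $0 < 1 < \frac{1}{\alpha} < 1 + \frac{1}{\alpha}$; the right inequality is trivial, and the left inequality $1 < \frac{1}{\alpha}$ follows from $\alpha \in (0,1)$. Thus the structural assumptions on $B$, $\gamma$ and the contraction constant of $Q$ are met. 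The hypotheses on $S$ and on $\{\alpha_{n}\}$ are imported verbatim from Theorem \ref{T1} via conditions (i)--(iii).

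Next I would check that scheme \eqref{s3e6} is exactly scheme \eqref{s3e1} after the substitution: $\alpha_{n}\gamma Q(x_{n}) = \alpha_{n}Q(x_{n})$ and $(1-\alpha_{n}B)S\!\left(\tfrac{x_{n}+x_{n+1}}{2}\right) = (1-\alpha_{n})S\!\left(\tfrac{x_{n}+x_{n+1}}{2}\right)$. Hence the sequence produced by \eqref{s3e6} coincides with a sequence to which Theorem \ref{T1} applies, and Theorem \ref{T1} directly yields that $\{x_{n}\}$ converges strongly to the fixed point $z = P_{{\rm Fix}(S)}(I-B+\gamma Q)z = P_{{\rm Fix}(S)}Q(z)$.

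Finally I would verify that the variational inequality \eqref{s1e4} collapses to \eqref{s1e2}: with $B = I$ and $\gamma = 1$ it reads
\begin{equation*}
\langle (B - \gamma Q)z, x - z\rangle = \langle (I - Q)z, x - z\rangle \geq 0, \qquad \forall\, x \in {\rm Fix}(S),
\end{equation*}
which is precisely \eqref{s1e2}, and the uniqueness of its solution is inherited from Theorem \ref{T1}. There is no real obstacle in this argument; the only point that requires a brief justification is the verification of the parameter constraint $0 < \gamma < \frac{\bar\gamma}{\alpha} < \gamma + \frac{1}{\alpha}$, and this is immediate from $\alpha\in(0,1)$. This completes the proposed proof.
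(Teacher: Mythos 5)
Your proposal is correct and follows exactly the paper's own route: the paper likewise obtains this corollary by substituting $\gamma:=1$ and $B:=I$ into Theorem \ref{T1}, under which the scheme \eqref{s3e1} reduces to \eqref{s3e6} and the variational inequality \eqref{s1e4} collapses to \eqref{s1e2}. Your explicit verification of the parameter constraint $0<\gamma<\frac{\bar\gamma}{\alpha}<\gamma+\frac{1}{\alpha}$ and of the identification $z=P_{{\rm Fix}(S)}Q(z)$ is in fact more careful than the paper, which states the specialization without these checks.
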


The following Corollary is due to Alghamdi {\it et al.} {\rm\cite{Alghamdi}} for fixed point problem of nonexpansive mapping. Take $\gamma:=1$ and $Q,B:=I$ in
Theorem \ref{T1} then the following Corollary is obtained.

\begin{corollary}{\rm\cite{Alghamdi}}
Let $H$ be a real Hilbert space and $Q:H\to H$ be a contraction  mapping with constant $\alpha\in (0,1)$. Let $S:H\to H$  be a nonexpansive mapping such that
${\rm Fix}(S)\neq\emptyset$. Let the iterative sequence $\{x_{n}\}$ be  generated by the following general viscosity implicit midpoint iterative schemes:
\begin{equation}\label{s3e7}
x_{n+1}=\alpha_{n}x_n+(1-\alpha_{n})S\left(\dfrac{x_{n}+x_{n+1}}{2}\right),~ n\geq 0,
\end{equation}
where $\{\alpha_{n}\}$ is the sequence in $(0,1)$ and satisfying the conditions (i)-(iii) of Theorem {\rm \ref{T1}}. Then the sequence $\{x_{n}\}$  converge strongly to $z \in {\rm Fix}(S)$.
\end{corollary}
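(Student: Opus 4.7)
The plan is to derive this corollary as an immediate specialization of Theorem~\ref{T1}: set $\gamma := 1$, $Q := I$, and $B := I$, so that $\bar\gamma = 1$ and the scheme (\ref{s3e1}) reduces exactly to (\ref{s3e7}). The parameter constraint $0 < \gamma < \bar\gamma/\alpha < \gamma + 1/\alpha$ from Theorem~\ref{T1} becomes $0 < 1 < 1/\alpha < 1 + 1/\alpha$, which is automatic for every $\alpha \in (0,1)$, and the hypotheses (i)--(iii) on $\{\alpha_n\}$ are assumed outright. Well-definedness of (\ref{s3e7}) for each $n$ follows from Banach's contraction principle applied to $y \mapsto \alpha_n x_n + (1-\alpha_n) S((x_n+y)/2)$, which is Lipschitz with constant $(1-\alpha_n)/2 < 1$.

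I would then retrace the proof of Theorem~\ref{T1} under this specialization, where most of the estimates simplify. Boundedness of $\{x_n\}$ is direct from nonexpansivity: for $p \in {\rm Fix}(S)$ the inequality $\|x_{n+1}-p\| \le \alpha_n\|x_n-p\| + \tfrac{1-\alpha_n}{2}(\|x_n-p\|+\|x_{n+1}-p\|)$ rearranges to $\|x_{n+1}-p\| \le \|x_n-p\|$, so in fact $\{x_n\}$ is Fej\'er monotone with respect to ${\rm Fix}(S)$. Asymptotic regularity $\|x_{n+1}-x_n\|\to 0$ is obtained from the same telescoping estimate used in Theorem~\ref{T1} combined with Lemma~\ref{l4} under conditions (i) and (iii), and $\|x_n - Sx_n\|\to 0$ then follows from the triangle-inequality decomposition used in the theorem together with condition (i). Boundedness next yields a subsequence $\{x_{n_k}\}$ with $x_{n_k}\rightharpoonup \hat x$, and $\hat x \in {\rm Fix}(S)$ by the demiclosedness principle (Lemma~\ref{l1}).

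The main obstacle I anticipate is the final passage to a \emph{strong} limit, because with $\gamma Q = B = I$ the map $P_{{\rm Fix}(S)}(I-B+\gamma Q) = P_{{\rm Fix}(S)}$ is no longer a strict contraction, so the Banach-contraction construction of the distinguished point $z$ in Theorem~\ref{T1} degenerates and the variational inequality (\ref{s1e4}) becomes vacuous. My plan is to take $z := \hat x$, the weak subsequential limit just produced, and then specialize the quadratic estimate (\ref{s3e5}) of Theorem~\ref{T1} to the present setting: the inner product term $\langle \gamma Q(z) - Bz,\, x_{n+1}-z\rangle$ vanishes identically because $\gamma Q(z) - Bz = z-z = 0$, so the residual $\sigma_n$ reduces to a quantity proportional to $\alpha_n \to 0$. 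Hence Lemma~\ref{l4} applies with $a_n = \|x_n - z\|^2$, $\beta_n = \delta_n$ (which satisfies $\delta_n \to 0$ and $\sum\delta_n = \infty$ by (i) and (ii)), and $\delta_n\sigma_n$ as the perturbation, forcing $\|x_n-z\|\to 0$. Fej\'er monotonicity ensures the choice of subsequence is immaterial, so the whole sequence converges strongly to $z\in {\rm Fix}(S)$.
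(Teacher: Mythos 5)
Your opening specialization coincides with the paper's entire proof of this corollary (the paper's ``proof'' is the single sentence: take $\gamma:=1$ and $Q,B:=I$ in Theorem \ref{T1}), and your diagnosis of the degeneracy is sharper than anything in the paper. However, your repair of the final step contains a genuine gap. You cannot simultaneously set $Q=I$ and retain a contraction constant $\alpha\in(0,1)$: the derivation of the key estimate (\ref{s3e5}) uses $\|Q(x_n)-Q(z)\|\leq\alpha\|x_n-z\|$, and with $Q=I$ this forces $\alpha=1$, whence $\bar\gamma-\gamma\alpha=1-1=0$ and $\delta_n=\dfrac{4(\bar\gamma-\gamma\alpha)\alpha_n}{1+2(\bar\gamma-\gamma\alpha)\alpha_n}\equiv 0$. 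Your claim that $\delta_n\to 0$ and $\sum_n\delta_n=\infty$ ``by (i) and (ii)'' is therefore false, and Lemma \ref{l4}, which requires $\beta_n\in(0,1)$ with divergent sum, is inapplicable. (If instead you keep $\alpha<1$ formally, the derivation of (\ref{s3e5}) is invalid, since $\|Q(x_n)-Q(z)\|=\|x_n-z\|$ cannot be bounded by $\alpha\|x_n-z\|$.) In the degenerate case the estimate collapses to exactly the Fej\'er monotonicity $\|x_{n+1}-z\|\leq\|x_n-z\|$ that you already established, and Fej\'er monotonicity together with \emph{weak} subsequential convergence to $z=\hat x\in\mathrm{Fix}(S)$ yields only weak convergence of the whole sequence (via an Opial-type argument); strong convergence would require a strongly convergent subsequence, which nothing in your argument produces.

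It is worth noting that this gap does not appear to be repairable along the lines you propose: the scheme (\ref{s3e7}) is an implicit Mann-type iteration, for which strong convergence is not to be expected in general, and the result of Alghamdi {\it et al.} \cite{Alghamdi} that this corollary cites is in fact a \emph{weak} convergence theorem. The paper's own one-line deduction suffers from precisely the defect you identified --- the identity is not an $\alpha$-contraction with $\alpha<1$, so the hypotheses of Theorem \ref{T1} are simply not satisfied --- and hence the corollary as stated (strong convergence) does not follow from Theorem \ref{T1} at all. Your argument, carried through honestly, proves weak convergence of $\{x_n\}$ to a point of $\mathrm{Fix}(S)$, which is the correct conclusion; the flaw lies less in your strategy than in your final attempt to force Lemma \ref{l4} to deliver a strong limit after the contraction mechanism that powers it has vanished.
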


\begin{remark}
Theorem {\rm\ref{T1}} extends and generalize the viscosity implicit midpoint rule of Xu {\it et al.} {\rm \cite{Xu}} and the implicit midpoint rule of
Alghamdi {\it et al.} {\rm\cite{Alghamdi}} to a general viscosity implicit midpoint rule for a nonexpansive mappings, which also includes the results of
{\rm \cite{Moudafi,Marino}} as special cases.
\end{remark}

\section{Applications}

\subsection{Application to Variational Inequalities}

We consider the following classical \verb"variational inequality problem"  (In short, VIP): Find $x^* \in K$ such that
\begin{equation}\label{s4e1}
\langle Ax^*,x-x^*\rangle\geq0,~~~ \forall  x\in K,
\end{equation}
where $A$ is a single-valued monotone mapping on $H$ and $K$ is a closed and convex subset of $H$. We assume $K\subset dom(A)$. An example of VIP (\ref{s4e1}) is
the \verb"constrained minimization problem" :  Find $x^* \in K$ such that
\begin{equation}\label{s4e2}
\min\limits_{x\in K}\phi(x^*)
\end{equation}
where $\phi:H\to \mathbb{R}$ is a lower-semicontinuous convex function. If $\phi$ is (Frechet) differentiable, then the minimization problem (\ref{s4e2}) is
equivalently reformulated as VIP (\ref{s4e1}) with $A=\nabla\phi$. Notice that the VIP (\ref{s4e1}) is equivalent to the following fixed point problem, for any $\lambda>0$,
\begin{equation}\label{s4e3}
Sx^*=x^*,~~~~~~~Sx:=P_{K}(I-\lambda A)x.
\end{equation}
If $A$ is Lipschitz continuous and strongly monotone, then, for $\lambda>0$ small enough, $S$ is a contraction mapping and its unique fixed point is also the
unique solution of the VIP (\ref{s4e1}). However, if $A$ is not strongly monotone, $S$ is no longer a contraction, in general. In this case we must deal with
nonexpansive mappings for solving the VIP (\ref{s4e1}). More precisely, we assume
\begin{enumerate}
\item[{(i)}] $A$ is $\theta$-\verb"Lipschitz continuous" for some $\theta>0$, i.e.,
$$\|Ax-Ay\|\leq \theta\|x-y\|,~~\forall x,y\in H.$$
\item[{(ii)}] $A$ is $\mu$-\verb"inverse strongly monotone" ($\mu$-ism) for some $\mu>0$, namely,
$$\langle Ax-Ay,x-y\geq \mu\|Ax-Ay\|,~~\forall x,y\in H.$$
\end{enumerate}
It is well known that by using the  conditions (i) and (ii),  the operator $S=P_{K}(I-\lambda A)$ is nonexpansive provided that $0<\lambda<2\mu$. It turns out that
for this range of values of $\lambda$, fixed point algorithms can be applied to solve the VIP (\ref{s4e1}). Applying Theorem \ref{T1},  we get the
following result.

\begin{theorem}\label{T2} Assume that {\rm VIP (\ref{s4e1})} is solvable in which $A$ satisfies the conditions (i) and (ii) with $0<\lambda<2\mu$. Let
$B:H\to H$  be a strongly positive bounded linear operator with constant $\bar\gamma>0$ such that
$0<\gamma<\frac{\bar\gamma}{\alpha}<\gamma+\frac{1}{\alpha}$ and $Q:H\to H$ be a contraction  mapping with constant $\alpha\in (0,1)$.
Let the iterative sequence $\{x_{n}\}$ be  generated by the following general viscosity implicit midpoint iterative schemes:
\begin{equation}\label{s4e4}
x_{n+1}=\alpha_{n}\gamma Q(x_n)+(1-\alpha_{n}B)P_{K}(I-\lambda A)\left(\dfrac{x_{n}+x_{n+1}}{2}\right),~n\geq0,
\end{equation}
where $\{\alpha_{n}\}$ is the sequence in $(0,1)$ and satisfying the conditions (i)-(iii) of Theorem {\rm \ref{T1}}. Then the sequence $\{x_{n}\}$
converge strongly to a solution  $z$  of  {\rm VIP (\ref{s4e1})}, which is also unique solution of variational inequality {\rm (\ref{s1e4})}.
\end{theorem}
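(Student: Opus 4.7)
The plan is to reduce Theorem \ref{T2} directly to Theorem \ref{T1} by exhibiting $S := P_K(I-\lambda A)$ as a nonexpansive self-mapping of $H$ whose fixed-point set coincides with the solution set of VIP (\ref{s4e1}). Once this is verified, the iteration (\ref{s4e4}) is just the scheme (\ref{s3e1}) with this particular $S$, so strong convergence follows at once.

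First I would verify nonexpansiveness. Since $P_K$ is nonexpansive by (\ref{c1s2e3*}), it suffices to show that $I-\lambda A$ is nonexpansive on $H$ for $0<\lambda<2\mu$. Using the $\mu$-ism assumption (ii), a direct expansion gives
\begin{equation*}
\|(I-\lambda A)x-(I-\lambda A)y\|^2 = \|x-y\|^2 - 2\lambda\langle Ax-Ay,x-y\rangle + \lambda^2\|Ax-Ay\|^2 \leq \|x-y\|^2 - \lambda(2\mu-\lambda)\|Ax-Ay\|^2,
\end{equation*}
which is $\leq \|x-y\|^2$ precisely when $0<\lambda<2\mu$. Composing with $P_K$ preserves nonexpansiveness, so $S$ is nonexpansive.

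Next I would identify fixed points of $S$ with solutions of the VIP. The characterization (\ref{c1s2e5}) of the metric projection gives that $x^*\in K$ satisfies $x^* = P_K(x^*-\lambda Ax^*)$ if and only if $\langle x^*-\lambda Ax^* - x^*,\, y - x^*\rangle \leq 0$ for every $y\in K$, which rearranges to (\ref{s4e1}). Hence Fix$(S)$ equals the VIP solution set, and is nonempty by the solvability assumption.

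With nonexpansiveness of $S$ and Fix$(S)\neq\emptyset$ in hand, the iteration (\ref{s4e4}) fits the hypotheses of Theorem \ref{T1} verbatim, and therefore $\{x_n\}$ converges strongly to some $z\in{\rm Fix}(S)$ which solves the variational inequality (\ref{s1e4}) and (being a fixed point of $S$) solves VIP (\ref{s4e1}). The only genuine checks are the nonexpansiveness estimate above and the projection characterization; everything else is bookkeeping, so I do not anticipate any real obstacle—this is a straightforward corollary of Theorem \ref{T1}.
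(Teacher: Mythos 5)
Your proposal is correct and follows exactly the route the paper intends: the paper itself offers no separate proof of Theorem \ref{T2}, merely noting that $S=P_{K}(I-\lambda A)$ is nonexpansive for $0<\lambda<2\mu$ under conditions (i)--(ii) and that the result then follows by ``applying Theorem \ref{T1}.'' Your write-up supplies the two details the paper leaves implicit (the $\mu$-ism estimate showing $I-\lambda A$ is nonexpansive, and the projection characterization identifying ${\rm Fix}(S)$ with the VIP solution set), both of which are carried out correctly.
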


\subsection{Fredholm Integral Equation}

Consider a \verb"Fredholm integral equation" of the following form
\begin{equation}\label{s4e5}
x(t)=g(t)+\int_{0}^{t}F(t, s, x(s))ds,~~~~ t\in [0,1],
\end{equation}
where $g$ is a continuous function on $[0,1]$ and $F:[0,1]\times [0,1]\times \mathbb{R}\to\mathbb{R}$  is continuous.
Note that if $F$ satisfies the Lipschitz continuity condition, i.e.,
\begin{equation}\nonumber
\vert F(t,s,x)-F(t,s,y)\vert\leq \vert x-y\vert,~~~ \forall t,s\in [0,1],~~x,y\in \mathbb{R},
\end{equation}
then equation (\ref{s4e5}) has at least one solution in $L^2[0,1]$ (see \cite{Lions}).
Define a mapping $S:L^2[0,1]\to L^2[0,1]$ by
\begin{equation}\label{s4e6}
(Sx)(t)=g(t)+\int_{0}^{t}F(t,s,x(s))ds,~~ t\in [0,1].
\end{equation}
It is easy to observe that $S$ is nonexpansive. In fact, we have, for $x,y\in L^2[0,1]$,
\begin{eqnarray}\nonumber
\|Sx\|^2&=&\int_{0}^{1}\left \vert(Sx)(t)-(Sy)(t)\right\vert^{2} dt\\\nonumber
&=&\int_{0}^{1} \left\vert\int_{0}^{1}(F(t,s,x(s))-F(t,s,x(s)))ds\right\vert^{2}dt\\\nonumber
&\leq&\int_{0}^{1} \left\vert\int_{0}^{1}\vert x(s)-y(s)\vert ds\right\vert^{2}dt\\\nonumber
&=&\int_{0}^{1}\left\vert x(s)-y(s)\right\vert^{2}ds\\\nonumber
&=&\|x-y\|^2.
\end{eqnarray}
This means that to find the solution of integral equation (\ref{s4e5}) is reduced to finding a fixed point of the nonexpansive mapping $S$ in the Hilbert
space $L^2[0,1]$. Initiating with any function $x_0\in L^2[0,1]$. The  sequence of functions $\{x_n\}$ in $L^2[0,1]$ generated by the general viscosity
implicit midpoint iterative scheme:
\begin{equation}\label{s4e7}
x_{n+1}=\alpha_{n}\gamma Q(x_n)+(1-\alpha_{n}B)S\left(\dfrac{x_{n}+x_{n+1}}{2}\right),~n\geq 0,
\end{equation}
where $\{\alpha_{n}\}$ is the sequence in $(0,1)$ and satisfying the  conditions (i)-(iii) of Theorem {\rm \ref{T1}}. Then the sequence $\{x_n\}$
converges strongly in $L^2[0,1]$ to the solution of integral equation (\ref{s4e5}).

\subsection{Periodic solution of a nonlinear evolution equation}

Consider the following time-dependent \verb"nonlinear evolution equation" in a  Hilbert space $H$,
\begin{equation}\label{s4e8}
\dfrac{du}{dt}+A(t)u= f(t,u),~~~ t>0,
\end{equation}
where $A(t)$ is a family of closed linear operators in $H$ and $f:\mathbb {R}\times H\to H$.
The following result is the existence of periodic solutions of nonlinear evolution equation (\ref{s4e8}) due to Browder \cite{Browder}.

\begin{theorem}\label{TT1}{\rm \cite{Browder}} Suppose that  $A(t)$ and $f(t,u)$ are periodic in $t$ of period $\omega>0$ and satisfy the following assumptions:
\begin{enumerate}
\item[{(i)}] For each $t$ and each pair $u,v\in H$,
$$Re\langle f(t,u)-f(t,v),u-v\rangle\leq 0.$$
\item[{(ii)}] For each $t$ and each $u\in D(A(t))$, $Re\langle A(t)u,u\rangle\geq 0$.
\item[{(iii)}] There exists a mild solution $u$ of equation {\rm (\ref{s4e8})} on $\mathbb{R}^{+}$ for each initial value $v\in H$.
Recall that $u$ is a mild solution of {\rm (\ref{s4e8})} with the initial value $u(0)=v$ if, for each $t>0$,
\begin{equation}\nonumber
u(t)= {\cal U}(t,0)v +\int_{0}^{1}U(t,s)fs,u(s)ds,
\end{equation}
where $\{{\cal U}(t,s)\}_{t\geq s\geq 0}$ is the evolution system for the homogeneous linear system
\begin{equation}\label{s4e9}
\dfrac{du}{dt}+A(t)u=0,~~~~ (t > s).
\end{equation}
\item[{(iv)}] There exists some $R>0$ such that
$$Re \langle f(t,u),u\rangle <0,$$
for $\|u\|=R$ and all $t\in [0,\omega]$.
\end{enumerate}
Then there exists an element $v$ of $H$ with $\|v\|<R$ such that the mild solution of equation {\rm (\ref{s4e8})} with the initial condition $u(0)=v$ is periodic
of period $\omega$.
\end{theorem}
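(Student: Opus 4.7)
The plan is to recast the existence of an $\omega$-periodic mild solution as a fixed-point problem for the Poincaré-type map and then exploit the dissipativity hypotheses (i)--(ii) together with the inward boundary condition (iv) to invoke a fixed-point theorem for nonexpansive self-maps of a closed bounded convex subset of $H$. Specifically, for each $v\in H$, let $u(\cdot;v)$ denote the mild solution on $\mathbb{R}^+$ guaranteed by (iii), and define $P:H\to H$ by $P(v)=u(\omega;v)$. Because $A(t)$ and $f(t,\cdot)$ are $\omega$-periodic, a standard uniqueness/extension argument shows that a fixed point of $P$ yields an $\omega$-periodic mild solution and vice versa, so it suffices to produce $v\in H$ with $P(v)=v$.

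The next step is to verify that $P$ is a nonexpansive mapping on $H$. Given two initial data $v_1,v_2$ with corresponding mild solutions $u_1,u_2$, set $w(t)=u_1(t)-u_2(t)$. Formally,
\begin{equation*}
\tfrac{d}{dt}\|w(t)\|^2 = -2\,\mathrm{Re}\langle A(t)w(t),w(t)\rangle + 2\,\mathrm{Re}\langle f(t,u_1(t))-f(t,u_2(t)),w(t)\rangle \leq 0
\end{equation*}
by (i) and (ii). This formal calculation must be justified at the level of mild solutions, and I expect that to be the main technical obstacle: one passes either through a Yosida-type regularization of $A(t)$ or uses directly that the evolution system $\{\mathcal{U}(t,s)\}$ for (\ref{s4e9}) is a contraction (a consequence of (ii)) together with the variation-of-constants formula and a Gronwall-type argument to conclude $\|w(\omega)\|\leq\|w(0)\|$. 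This gives $\|P(v_1)-P(v_2)\|\leq\|v_1-v_2\|$.

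Next I would show that the closed ball $B_R=\{v\in H:\|v\|\leq R\}$ is invariant under $P$. From (ii) and (iv), the same kind of dissipative computation applied to $\|u(t;v)\|^2$ gives, for $v$ with $\|v\|=R$, an inward pointing estimate of the form $\tfrac{d}{dt}\|u(t;v)\|^2\big|_{t=0}\leq 2\,\mathrm{Re}\langle f(0,v),v\rangle <0$. A continuity/compactness argument over the compact interval $[0,\omega]$, combined with a comparison argument that prevents $\|u(t;v)\|$ from ever exiting $B_R$, then yields $P(B_R)\subset B_R$, with in fact $\|P(v)\|<R$.

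Finally, $B_R$ is a nonempty closed bounded convex subset of the Hilbert space $H$ and $P:B_R\to B_R$ is nonexpansive, so the Browder--G\"ohde--Kirk fixed point theorem for nonexpansive self-maps in Hilbert space delivers a fixed point $v\in B_R$ with $\|v\|<R$. The mild solution of (\ref{s4e8}) with $u(0)=v$ is then $\omega$-periodic, completing the proof. The delicate point throughout is that (\ref{s4e8}) only admits mild (not classical) solutions, so the differential inequalities driving nonexpansiveness and invariance must be handled via the integral formulation and the contractive properties of $\mathcal{U}(t,s)$ rather than by formal differentiation.
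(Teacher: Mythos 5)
Your proposal is correct and follows essentially the same route as the paper (and Browder's original argument), which the paper only cites and then sketches in the paragraph following the theorem: define the period map $Sv=u(\omega)$, deduce nonexpansiveness from the dissipativity conditions (i)--(ii), use (iv) to show $S$ maps the closed ball $\{v:\|v\|\leq R\}$ into itself, and apply the Browder--G\"ohde--Kirk fixed point theorem. Your added caution about justifying the differential inequalities at the level of mild solutions via the contractive evolution system $\mathcal{U}(t,s)$ is exactly the right technical point.
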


\vspace{.25cm}
Next, we  apply the general viscosity implicit midpoint rule for nonexpansive mappings to provide an implicit iterative scheme for finding a periodic solution
of (\ref{s4e8}). As a matter of fact, define a mapping $S:H\to H$ by assigning to each $v\in H$ the value u($\omega$), where $u$ is the solution of
(\ref{s4e8}) satisfying the initial condition $u(0)=v$. Namely, we define $S$ by  $Sv=u(\omega)$, where $u$ solves (\ref{s4e8}) with $u(0)=v$.

\vspace{.25cm}
We then find that $S$ is nonexpansive.   Moreover, condition (iv) of Theorem \ref{TT1} forces $S$ to map the closed ball ${\cal B}:=\{v\in H:\|v\|\leq R\}$ into
itself. Consequently, $S$ has a fixed point which we denote by $v$, and the corresponding solution $u$ of (\ref{s4e8}) with the initial condition $u(0)=v$ is
a desired periodic solution of (\ref{s4e8}) with period $\omega$. In other words, to find a periodic solution u of (\ref{s4e8}) is equivalent to finding a fixed
point of $S$. Therefore the general viscosity implicit midpoint rule is applicable to solve  (\ref{s4e8}), in which $\{x_n\}$ is generated by the
general viscosity implicit midpoint iterative scheme:
\begin{equation}\label{s4e10}
x_{n+1}=\alpha_{n}\gamma Q(x_n)+(1-\alpha_{n}B)S\left(\dfrac{x_{n}+x_{n+1}}{2}\right),
\end{equation}
where $\{\alpha_{n}\}$ is the sequence in $(0,1)$ and satisfying the  conditions (i)-(iii) of Theorem {\rm \ref{T1}}. Then the sequence $\{x_n\}$
converges  weakly to a fixed point $v$ of $S$, and the  solution of (\ref{s4e8}) with the initial value $u(0)=\omega$ is a periodic solution of (\ref{s4e8}).

\vspace{.5cm}
\noindent
{\bf Conclusion:}
The present work has been aimed to study the general viscosity implicit midpoint rule for nonexpansive mapping and proved the strong convergence theorem for
solving fixed point for a nonexpansive mapping. Theorem {\rm\ref{T1}} extends and generalize the viscosity implicit midpoint rule of Xu {\it et al.} {\rm \cite{Xu}} and the implicit midpoint rule of
Alghamdi {\it et al.} {\rm\cite{Alghamdi}} to a general viscosity implicit midpoint rule for a nonexpansive mappings, which also includes the results of
{\rm \cite{Moudafi,Marino}} as special cases.

\vspace{.5cm}

\end{document}